\theoremstyle{plain}
\newtheorem{theorem}{Theorem}[section]
\newtheorem{proposition}[theorem]{Proposition}
\newtheorem{lemma}[theorem]{Lemma}
\newtheorem{corollary}[theorem]{Corollary}
\theoremstyle{definition}
\theoremstyle{remark}
\newtheorem{remark}[theorem]{Remark}
\begin{document}

\title[(Disk, Essential surface) pairs of Heegaard splittings]
{(Disk, Essential surface) pairs of Heegaard splittings that
intersect in one point}

\author{Jung Hoon Lee}
\address{School of Mathematics, KIAS\\
207-43, Cheongnyangni 2-dong, Dongdaemun-gu\\
Seoul, Korea. Tel:\,+82-2-958-3736}
 \email{jhlee@kias.re.kr}

\subjclass{Primary 57N10, 57M50} \keywords{Heegaard splitting,
essential surface, strongly irreducible}

\begin{abstract}
 We consider a Heegaard splitting $M=H_1\cup_S H_2$ of a
$3$-manifold $M$ having an essential disk $D\subset H_1$ and an
essential surface $F\subset H_2$ with $|D\cap F|=1$. (We require
that $\partial F\subset S$ when $H_2$ is a compressionbody with
$\partial_{-}H_2\ne\emptyset$.)

 Let $F$ be a genus $g$ surface with $n$ boundary components.
 From $M=H_1\cup_S H_2$, we obtain a genus $g(S)+2g+n-2$
 Heegaard splitting $M=H'_1\cup_{S'} H'_2$ by cutting $H_2$
 along $F$ and attaching $F\times I$ to $H_1$ along
 $\partial F\times I$.
 As an application, by using a theorem due to Casson and Gordon
 \cite{Casson-Gordon},
 we give examples of $3$-manifolds having two Heegaard splittings of distinct genera
 where one of the two Heegaard splittings is a strongly irreducible
 non-minimal genus splitting and it is obtained from the other by the
 above construction.
\end{abstract}

\maketitle

\section{Introduction}

Every compact $3$-manifold $M$ admits a Heegaard splitting and
there are various Heegaard splittings as the genus varies. If $g$
is the minimal genus of Heegaard splittings of $M$, then for each
$g'>g$ there exists at least one Heegaard splitting of genus $g'$
--- a splitting obtained by stabilizations.

From a Heegaard splitting, we can obtain another Heegaard
splitting of different genus which is not just a stabilization if
the original one has certain embedded surfaces that intersect in
one point. A stabilized Heegaard splitting $H_1\cup_S H_2$, which
has essential disks $D_1\subset H_1$ and $D_2\subset H_2$ with
$|D_1\cap D_2|=1$, can be destabilized and the genus goes down.

Concerning (Disk, Annulus) pairs, many people \cite{Schleimer},
\cite{Saito}, \cite{Moriah-Sedgwick}, \cite{Lee}
 considered several notions as in the following and their relations
 with other notions on Heegaard splittings.

\begin{itemize}
\item Essential disk $D\subset H_1$ and essential annulus
$A\subset H_2$ with $D\cap A=\emptyset$
 \item Essential disk $D\subset H_1$ and essential annulus
$A\subset H_2$ with $\partial D$ equal to one component of
$\partial A$
 \item Essential disk $D\subset H_1$ and essential annulus
 $A\subset H_2$ with $|D\cap A|=1$
 \item Essential disk $D\subset H_1$ and spanning annulus
$A$ in a compressionbody $H_2$ with $D\cap A=\emptyset$
 \item Essential disk $D\subset H_1$ and spanning annulus $A$ in a
compressionbody $H_2$ with $|D\cap A|=1$
\end{itemize}

 In \cite{Lee}, the author considered a Heegaard splitting
$H_1\cup_S H_2$
 having an essential disk $D\subset H_1$ and an essential
 annulus $A\subset H_2$ with $|D\cap A|=1$ and it
 was shown that such a Heegaard splitting has the disjoint curve
 property, a notion which was introduced by Thompson
 \cite{Thompson},
 and another Heegaard splitting $H'_1\cup_{S'}H'_2$ can be
 obtained from $H_1\cup_S H_2$ by removing a neighborhood of $A$
 from $H_2$ and attaching it to $H_1$. In this case the genus
 of Heegaard splitting remains unchanged.

 In this paper, we consider a Heegaard splitting $H_1\cup_S H_2$
 of a $3$-manifold $M$
 having an essential disk $D\subset H_1$ and an essential
 surface $F\subset H_2$ with $|D\cap F|=1$.
We require that $\partial F\subset S$ when $H_2$ is a
compressionbody with $\partial_{-}H_2\ne\emptyset$.
 We denote it as a
 {\it{strong $(D,F)$ pair}} for consistency of terminology with
 \cite{Moriah-Sedgwick}.
 First we show that if
 $F$ has genus $g$ and $n$ boundary components, the distance
 $d(S)$ of $H_1\cup_S H_2$ is bounded above by $2g+n$ (Theorem 2.3).

 From $H_1\cup_S H_2$ we can obtain another Heegaard splitting
 $H'_1\cup_{S'} H'_2$ by removing a neighborhood of $F$ from $H_2$
 and attaching it to $H_1$.

\begin{theorem}
Let $H_1\cup_S H_2$ be a Heegaard splitting of a $3$-manifold with
a strong $(D,F)$ pair. Let $H_1'$ be obtained from $H_1$ by
attaching $F\times I$ along $\partial F\times I$ and $H_2'$ be
obtained from $H_2$ by cutting along $F$. Then $H_1'\cup_{S'}
H_2'$ is a Heegaard splitting of genus $g(S)+2g+n-2$.
\end{theorem}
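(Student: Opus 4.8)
The plan is to check directly that each side of the new decomposition is a compressionbody (a handlebody when $\partial_-=\emptyset$) with common positive boundary $S'$, and then to recover the genus from an Euler characteristic count. First I would record the model piece. Since $F$ has genus $g$ and $n$ boundary components it collapses to a spine of first Betti number $2g+n-1$, so the product $F\times I$ is a handlebody of genus $2g+n-1$ carrying the $n$ annuli $\partial F\times I$ in its boundary. The new splitting is obtained by transferring the regular neighborhood $N(F)=F\times I$ from $H_2$ to $H_1$, so that $H_1'=H_1\cup_{\partial F\times I}(F\times I)$, $H_2'=H_2\setminus\mathrm{int}\,N(F)$, and $S'=(S\setminus\mathrm{int}(\partial F\times I))\cup(F\times\{0\})\cup(F\times\{1\})$. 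The hypothesis $\partial F\subset S$ in the compressionbody case guarantees $\partial_-H_2'=\partial_-H_2$, so $N(F)$ really is transferred across $S$.

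The genus is then a surgical Euler characteristic computation. The surface $S'$ is built from $S$ by deleting the $n$ annular neighborhoods of $\partial F$ (each of Euler characteristic $0$) and gluing in the two copies $F\times\{0\}$ and $F\times\{1\}$ along the $2n$ resulting circles. Gluing along circles does not change the Euler characteristic, so $\chi(S')=\chi(S)+2\chi(F)=(2-2g(S))+2(2-2g-n)$, which rearranges to $g(S')=g(S)+2g+n-2$, exactly the claimed genus. This step is routine once $S'$ has been identified correctly.

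The substantive content is that $H_1'$ and $H_2'$ are really compressionbodies, and this is where the hypothesis $|D\cap F|=1$ is used. The intersection point lies on $S$, so $\partial D$ meets $\partial F$ in a single point; hence $\partial D$ crosses the core of exactly one of the gluing annuli once and is disjoint from the rest. That annulus is therefore primitive in $H_1$, certified by the meridian disk $D$. For $H_1'$ I would regard it as the union of the handlebodies $H_1$ and $F\times I$ along the $n$ annuli $\partial F\times I$, and use the primitive annulus together with the product structure of $F\times I$ to slide and cancel, exhibiting $H_1'$ as a handlebody. For $H_2'$ I would use that $F$ is essential: incompressibility and boundary-incompressibility let a complete meridian system of $H_2$ be isotoped into efficient position with respect to $F$, so that cutting along $F$ leaves a compressionbody with positive boundary $S'$.

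The step I expect to be the main obstacle is the handlebody verification for $H_1'$: gluing two handlebodies along $n\ge 2$ annuli need not give a handlebody, so the argument must genuinely exploit that one annulus is primitive and that the $n$ annuli are linked through the product $F\times I$. The delicate point is to propagate the single primitivity, via the product structure, to a global reduction of the whole gluing to a sequence of standard $1$-handle attachments. I would set up the disk case $g=0,\,n=1$ — where the move is precisely a destabilization along the pair $(D,F)$ and $\Delta g=-1$ — as the base model, and then induct on a band decomposition of $F$, each of the $2g+n-1$ bands contributing one controlled handle and raising $\Delta g$ by $1$, to reach the general statement.
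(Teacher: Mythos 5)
Your proposal follows essentially the same route as the paper: the handlebody verification for $H_1'$ via a decomposition of $F$ into a disk plus $2g+n-1$ bands, with the single point $D\cap F$ making one gluing annulus primitive so that $D$ cancels against the distinguished band while the remaining bands contribute ordinary $1$-handles, is exactly the paper's Lemma 3.1, and the claim that cutting a compressionbody along an incompressible surface with boundary in $\partial_+H_2$ yields compressionbodies is the paper's Lemma 3.2 (quoted from Schultens). Two small corrections: that lemma needs only incompressibility --- $F$ is in general $\partial$-compressible (the distance bound of Theorem 2.3 relies on boundary-compressing it), so you should not invoke boundary-incompressibility; and you should add the paper's one-line observation that $H_2'$ is connected because its positive boundary is the connected surface $S'=\partial H_1'$.
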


The construction of new Heegaard surface in Theorem 1.1 resembles
quite a bit the {\it {Haken sum}} in Moriah, Schleimer, and
Sedgwick's paper \cite{Moriah-Schleimer-Sedgwick}. In that paper,
they considered the Haken sum of Heegaard surface with copies of
an incompressible surface in the manifold and obtained infinitely
many distinct Heegaard splittings. Also there are related works by
Kobayashi \cite{Kobayashi}, and Lustig and Moriah
\cite{Lustig-Moriah}. However, in our case the essential surface
lives in one of the compressionbodies (Fig. 1).

\begin{figure}[h]
    \centerline{\includegraphics[width=6cm]{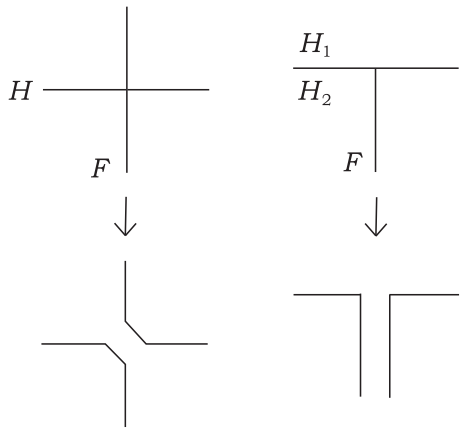}}
    \caption{Haken sum and compressing along an essential surface
    in a compressionbody}\label{fig1}
\end{figure}

In Theorem 3.5, it is shown that $H'_1\cup_{S'}H'_2$ has the
disjoint curve property if $F$ is not a disk.

In most part of the paper, we are considering the case $\partial
F\subset\partial_{+} H_2$ when $H_2$ is a compressionbody with
$\partial_{-}H_2\ne\emptyset$. In the last part of section 3, we
briefly consider the case when $F$ is a spanning annulus in a
compressionbody, and show a corresponding result (Theorem 3.6).

For $g\ge 1$ or $n\ge 3$, the genus of $H'_1\cup_{S'} H'_2$ is
greater than that of $H_1\cup_S H_2$. We give examples of
$3$-manifolds having two Heegaard splittings of distinct genera
 where one of the two Heegaard splittings is a strongly irreducible
 non-minimal genus splitting and it is obtained from the other by the
 method in Theorem 1.1.
 The examples are constructed by doing
 $1/q$-Dehn surgery ($|q|\ge 6$) on certain knots and a theorem
 due to Casson and Gordon is used to show strong irreducibility.

\begin{theorem}
Let $K$ be a knot with the following properties.
\begin{itemize}
 \item A minimal genus Seifert surface $F$ (of genus $g$) for $K$ is free.
 \item Every tunnels of an unknotting tunnel system
 $\{t_1,t_2,\cdots,t_t\}$ can be isotoped to lie on $F$ and
 mutually disjoint.
\item $\bigcup^t_{i=1} t_i$ cuts $F$ into a
 connected subsurface $F'\subset F$.
 \item $t+1<2g$
 \end{itemize}
 Let $K(1/q)$ be the manifold obtained by doing $1/q$-Dehn surgery $(|q|\ge 6)$
 on $K$ in $S^3$.
Then $K(1/q)$ has a genus $2g$ strongly irreducible Heegaard
splitting and a genus $t+1$ Heegaard splitting, and the two are
related by the construction in Theorem 1.1.
\end{theorem}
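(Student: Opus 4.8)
The plan is to realize both splittings of $K(1/q)$ explicitly and to identify the genus $2g$ one with the Heegaard splitting determined by the free Seifert surface $F$, to which the Casson--Gordon criterion applies; the genus $t+1$ splitting will serve as the input of Theorem 1.1 and also witness that the genus $2g$ splitting is non-minimal.

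First I would build the genus $t+1$ splitting. Since the $t_i$ form an unknotting tunnel system, $H_1=N(K\cup\bigcup t_i)$ and its complement $H_2=S^3\setminus H_1$ are handlebodies of genus $t+1$, giving a genus $t+1$ Heegaard splitting of $S^3$. The knot $K$ is a core circle of one handle of $H_1$, so removing $N(K)$ and regluing a solid torus along the $1/q$ slope leaves $H_1$ a handlebody of the same genus (a collar $T^2\times I$ reglued to a solid torus is again a solid torus) and does not touch $H_2$; hence $K(1/q)=H_1'\cup_S H_2$ is a Heegaard splitting of genus $t+1$. Because $t+1<2g$, the minimal Heegaard genus of $K(1/q)$ is at most $t+1$, so the genus $2g$ splitting constructed below is automatically of non-minimal genus.

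Next I would produce the strong $(D,F')$ pair. Put $F'=F\cap H_2=F\setminus N(K\cup\bigcup t_i)$; this is connected by hypothesis and properly embedded in $H_2$ with $\partial F'\subset S$, and cutting $F$ along the $t$ arcs $t_i$ gives $\chi(F')=\chi(F)+t=1-2g+t$, so if $F'$ has genus $g'$ and $n'$ boundary circles then
\[
2g'+n'=2g-t+1 .
\]
For $D$ I take the meridian disk of the surgery solid torus, with $\partial D=\mu+q\lambda$ on $\partial N(K)$. Since $\partial F$ meets $\partial N(K)$ in the Seifert longitude $\lambda$, while the tunnel arcs of $\partial F'$ lie away from the $N(K)$ handle, $|D\cap F'|=|(\mu+q\lambda)\cdot\lambda|=1$; here I would also check that $F'$ is essential in $H_2$ (using that the minimal genus $F$ is incompressible and that $F$ is free) and that $D$ is essential, so that $(D,F')$ is a strong pair. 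Applying Theorem 1.1 then yields a splitting of genus $(t+1)+2g'+n'-2=2g$, and since the new $H_1$-side is $H_1'\cup(F'\times I)$, which rebuilds $N(F)$, while the new $H_2$-side is $H_2$ cut along $F'$, homeomorphic to $S^3\setminus N(F)$, this genus $2g$ splitting is exactly the splitting of $K(1/q)$ determined by the free Seifert surface $F$.

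Finally I would invoke Casson--Gordon \cite{Casson-Gordon}: because $F$ is a free, incompressible (minimal genus) Seifert surface and $|q|\ge 6$, the Heegaard splitting of $K(1/q)$ associated to $F$ is strongly irreducible. The main obstacle is this last step, together with the essentiality of $F'$. One must verify that our genus $2g$ splitting matches the hypotheses of the Casson--Gordon theorem exactly and that the bound $|q|\ge 6$ is precisely what their estimate requires: a weak reduction of the splitting would have to be carried by a compression of $F$ or would force $|q|$ to be small, and ruling this out is the crux. By contrast, the bookkeeping that identifies the new $H_1$-side with $N(F)$ and that shows $F'$ is essential in the handlebody $H_2$ is routine once freeness of $F$ is used, so the genuine difficulty is concentrated in the application of the Casson--Gordon strong-irreducibility criterion.
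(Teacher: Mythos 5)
Your proposal is correct and follows essentially the same route as the paper: the genus $t+1$ splitting from the tunnel system survives the surgery, the meridian disk of the filling torus together with the cut-open Seifert surface $F'$ forms the strong $(D,F')$ pair feeding into Theorem 1.1, and the resulting genus $2g$ splitting is the one induced by the free Seifert surface, whose strong irreducibility is exactly the Casson--Gordon theorem applied to $\Sigma=\partial N(F)$ with $\Sigma-N(K)$ incompressible on both sides. Your Euler characteristic computation $2g'+n'=2g-t+1$ makes the genus count explicit where the paper leaves it implicit, but otherwise the two arguments coincide.
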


In particular, if $K$ is a torus knot, $K(1/q)$ is a
 Seifert fibered space over $S^2$ with three exceptional fibers
 \cite{Moriah-Sedgwick}, \cite{Sedgwick}, and
 Theorem 1.2 gives some insight to the relation of a vertical
 splitting and a horizontal splitting of such manifolds.

\section{Heegaard splitting with a strong $(D,F)$ pair}

 First we show that for a $(D,F)$ pair with $|D\cap F|=1$, in fact
 the essentiality of $F$ follows automatically from incompressibility. In
 other words, we have the following.

 \begin{proposition}
 For a Heegaard splitting $H_1\cup_S H_2$, let $D$ be an essential
 disk in $H_1$ and $F$ $(\partial F\ne \emptyset,\,\,
 \partial F\subset S)$ be an incompressible
 surface in $H_2$ such that $|D\cap F|=1$. Then $F$ is essential in
 $H_2$.
 \end{proposition}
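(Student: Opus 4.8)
The plan is to exploit the fact that $D$ and $F$ can only meet along $S$. Since the interior of $D$ lies in $H_1$ and the interior of $F$ lies in $H_2$, these interiors are disjoint, so every point of $D\cap F$ lies on $S$. Hence the single intersection point is a transverse intersection of the curve $\partial D$ with the $1$-manifold $\partial F$ on $S$, and $\partial D$ meets exactly one component $c$ of $\partial F$ in exactly one point. I would record the homological consequence immediately: the mod-$2$ intersection pairing on $S$ gives $\langle [\partial D],[\partial F]\rangle = |\partial D\cap \partial F|\bmod 2 = 1$, so in particular $[\partial F]\neq 0$ in $H_1(S;\mathbf{Z}/2)$.

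Recall that $F$ is \emph{essential} if it is incompressible, $\partial$-incompressible, and not $\partial$-parallel; incompressibility is given by hypothesis, so two things remain. The conceptual heart is ruling out $\partial$-parallelism, and here the homology class above does all the work. If $F$ were parallel to a subsurface $G\subset \partial_+H_2 = S$, then $\partial F$ would be isotopic in $S$ to $\partial G$; but $\partial G$ bounds the subsurface $G$ and is therefore null-homologous in $S$, giving $[\partial F]=0$ in $H_1(S;\mathbf{Z}/2)$. This contradicts $\langle [\partial D],[\partial F]\rangle = 1$ from the first step, so $F$ cannot be $\partial$-parallel. Note this uses only that $\partial D$ meets $\partial F$ an odd number of times, so it is robust under any tightening of the curves on $S$, and it applies uniformly even when $\partial F$ has several components of which $\partial D$ meets only one.

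For $\partial$-incompressibility I would argue by contradiction from a $\partial$-compressing disk $E\subset H_2$ with $\partial E = \alpha\cup\beta$, where $\alpha = E\cap F$ is an essential arc in $F$ and $\beta = E\cap S$. The $\partial$-compression produces an incompressible surface $\hat F$ of strictly smaller complexity whose boundary $\partial\hat F$ is obtained from $\partial F$ by a band move along $\beta$ in $S$; the aim is to show that this move cannot kill the odd intersection with $\partial D$, so that $[\partial\hat F]\neq 0$ persists and, after finitely many such moves, one reaches an incompressible and $\partial$-incompressible surface which the previous paragraph forces to be non-$\partial$-parallel at minimal complexity, contradicting the essentiality of $\alpha$. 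I expect this to be the main obstacle: one must control how $\beta$ crosses $\partial D$ on $S$ — isotoping $\beta$ off $\partial D$ where possible and handling the residual crossing near the point $p$ — so that the dual role of $D$, namely the single transverse intersection, is preserved under $\partial$-compression. By contrast, the exclusion of $\partial$-parallelism is immediate from the homological pairing and is what the statement is really about.
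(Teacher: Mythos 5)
Your treatment of $\partial$-parallelism is correct and is in substance the paper's own proof: the paper argues that a loop must cross $\partial F'=\partial F$ an even number of times to enter and leave the parallel subsurface $F'\subset S$, which is exactly your observation that $[\partial F]=0$ in $H_1(S;\mathbf{Z}/2)$ forces $\langle[\partial D],[\partial F]\rangle=0$, contradicting $|\partial D\cap\partial F|=1$. That part needs no changes and, for the definition of ``essential'' actually in use here, it finishes the proof.

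The genuine problem is your third paragraph. You have read ``essential'' as incompressible, $\partial$-incompressible, and not $\partial$-parallel, but in this paper (as in Morimoto's Lemma 2.2, quoted as Lemma 2.2 here) an essential surface in a compressionbody means incompressible and not $\partial$-parallel; $\partial$-incompressibility is not part of the definition. More importantly, $\partial$-incompressibility is not just unnecessary but unprovable: an incompressible surface with nonempty boundary properly embedded in a handlebody (or with boundary in $\partial_+$ of a compressionbody) is always $\partial$-compressible, since it must meet a meridian disk system and an outermost arc of intersection yields a $\partial$-compressing disk. The paper relies on exactly this in the proof of Theorem 2.3, where it repeatedly $\partial$-compresses the essential surface $F$. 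So the argument you defer as ``the main obstacle'' cannot be completed — the surface $\hat F$ you would produce does exist, and no control of $\beta$ relative to $\partial D$ will prevent that. Delete the third paragraph (and the claim that $\partial$-incompressibility ``remains'' to be shown), and your proof is complete and agrees with the paper's.
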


\begin{proof}
Suppose $F$ is not essential in $H_2$. Then $F$ is parallel to a
subsurface $F'\subset S$ (rel. $\partial F$). When we go around
$\partial D$, we pass through $\partial F'$ from $S-F'$ to the
interior of $F'$ at some time. We should pass through $\partial
F'$ at least once more to go around all of $\partial D$. This is a
contradiction since $|D\cap F|=|D\cap
\partial F|=1$ and $\partial F=\partial F'$.
\end{proof}

Now we consider the distance, due to Hempel \cite{Hempel}, of a
Heegaard splitting with a strong $(D,F)$ pair. The {\it{distance}}
$d(S)$ of a Heegaard splitting $H_1\cup_S H_2$ is the smallest
number $n\ge 0$ so that there is a sequence of essential simple
closed curves $\alpha_0,\cdots,\alpha_n$ in $S$ with $\alpha_0$
bounding a disk in $H_1$, $\alpha_n$ bounding a disk in $H_2$ and
for each $1\le i\le n$, $\alpha_{i-1}$ and $\alpha_i$ can be
isotoped in $S$ to be disjoint.

We need the the following technical lemma on boundary compression
by Morimoto to get an upper bound for distance.

\begin{lemma}(Lemma 5.1 of \cite{Morimoto})
Let $W$ be a compact orientable $3$-manifold, and let $F$ be an
essential surface properly embedded in $W$ such that $\partial
F\ne\emptyset$ and $\partial F$ is contained in a single component
of $\partial W$. Let $F'$ be the $2$-manifold obtained from $F$ by
a boundary compression. Then $F'$ is incompressible and has a
component which is not $\partial$-parallel. Hence $F'$ is
essential.
 \end{lemma}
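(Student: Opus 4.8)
The plan is to establish the two assertions separately: first that $F'$ is incompressible, and then that $F'$ has a component that is not $\partial$-parallel; together these give essentiality. Throughout I regard the boundary compression as removing a band $B=N(\alpha)$, a neighborhood in $F$ of the compressing arc $\alpha=\Delta\cap F$, and gluing in two parallel copies $\Delta_1,\Delta_2$ of the disk $\Delta$, so that $F'=(F\setminus B)\cup\Delta_1\cup\Delta_2$. Dually, $F$ is recovered from $F'$ by deleting the two \emph{scar} disks $\Delta_1,\Delta_2$ and reattaching the band $B$ along the arc $\beta=\Delta\cap\partial W$. Since $\partial F$ lies in the single component $\partial_0 W$, the arc $\beta$, the band $B$, and (as will be used) all the relevant parallelisms live near $\partial_0 W$.

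For incompressibility, suppose $E$ is a compressing disk for $F'$, so $\partial E$ is essential in $F'$ and $\mathrm{int}\,E\cap F'=\emptyset$. Since the scars $\Delta_1,\Delta_2$ are disks lying in $F'$, I would isotope $\partial E$ within $F'$ off them; as $E$ meets $F'$ only along $\partial E$, this makes $E$ disjoint from $\Delta_1\cup\Delta_2$, so that $\partial E\subset F\setminus B\subset F$ is a simple closed curve in the interior of $F$. Incompressibility of $F$ then forces $\partial E$ to bound a disk $D_F\subset F$. Since $\partial E$ is disjoint from the connected band $B$, either $B\cap D_F=\emptyset$, in which case $D_F\subset F\setminus B\subset F'$ and $\partial E$ is inessential in $F'$, a contradiction; or $B\subset D_F$, which cannot happen because $\alpha\subset B$ has its endpoints on $\partial F$ whereas $\partial D_F=\partial E$ is an interior curve, forcing $D_F\cap\partial F=\emptyset$. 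Hence $F'$ is incompressible.

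For the second assertion I argue by contradiction: suppose every component of $F'$ is $\partial$-parallel. Since $\partial F'\subset\partial_0 W$, each component is parallel to a subsurface of $\partial_0 W$, so $F'$ can be isotoped into a collar $C\cong\partial_0 W\times[0,1]$ with $\partial_0 W=\partial_0 W\times\{0\}$. Because $\beta\subset\partial_0 W$, the reattaching band $B$ can be taken inside $C$ as well, so that $F=(F'\setminus(\Delta_1\cup\Delta_2))\cup B$ lies in $C$ with $\partial F\subset\partial_0 W\times\{0\}$. The goal is to conclude that this incompressible surface in the product $C$, having boundary only on the bottom, is parallel into $\partial_0 W\times\{0\}$, which would make $F$ $\partial$-parallel in $W$ and contradict its essentiality. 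When the two scars lie on one component this is immediate: attaching $B$ to a surface parallel to $G\subset\partial_0 W$ yields a surface parallel to $(G\cup B)\subset\partial_0 W$, and the remaining components, being untouched, stay $\partial$-parallel, so $F$ is $\partial$-parallel.

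The main obstacle is exactly this last step when the two scars lie on two different components of $F'$, or more generally when the product regions realizing the parallelisms are nested. In that situation reattaching $B$ joins two boundary-parallel pieces whose collars may sit one inside the other, and one must check that the band can be isotoped so that the union remains parallel into $\partial_0 W$ rather than becoming genuinely essential. This is where I expect the hypothesis that $\partial F$ lies in a single boundary component to be decisive: it guarantees that $\beta$ and every relevant parallelism point toward the same $\partial_0 W$, so that the nesting can be resolved—using the incompressibility of $F'$ just established—and the whole reattachment kept within a single collar. Carrying out this reduction carefully, ruling out the nested, linked configuration, is the crux of the argument.
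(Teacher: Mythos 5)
This lemma is quoted from Morimoto (Lemma 5.1 of \cite{Morimoto}); the paper supplies no proof of its own, so the only question is whether your argument stands on its own. Your incompressibility argument is essentially the standard one and is nearly complete, but note one small gap: after isotoping $\partial E$ off the scars $\Delta_1\cup\Delta_2$ you treat $E$ as a compressing disk for $F$, yet the \emph{interior} of $E$ may still meet the band $B=\overline{F\setminus F'}$ (it is disjoint from $F'$, not from $F$). Since $B$ is a disk, the intersection circles are inessential and can be removed by an innermost-circle disk swap inside $B$ before you invoke incompressibility of $F$; with that added, the dichotomy $B\cap D_F=\emptyset$ versus $B\subset D_F$ and the conclusion are correct.

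The second half, however, is not a proof. You reduce to showing that if every component of $F'$ is parallel into $\partial_0W$ then $F$, obtained by reattaching the band $B$ along $\beta\subset\partial_0W$, is $\partial$-parallel, you dispose of the easy case (both scars on one component, parallelism regions unnested), and then you explicitly stop, stating that ruling out the nested or linked configuration ``is the crux of the argument.'' That crux \emph{is} the lemma: the case where the two scars lie on different components of $F'$, or where one product region sits inside another, is exactly where the hypothesis that $\partial F$ lies in a single component of $\partial W$ must be exploited, and no argument is given. Be aware also that the statement you are trying to prove in this step is false without further input: attaching a band along the boundary to a $\partial$-parallel surface can produce a surface that is compressible rather than $\partial$-parallel, so the contradiction cannot come from the band move alone --- it must use the incompressibility of $F$ (or of $F'$) to control how $B$ sits relative to an outermost product region. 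As written, the proposal establishes incompressibility of $F'$ modulo a routine repair, but leaves the existence of a non-$\partial$-parallel component unproved.
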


\begin{theorem}
Let $H_1\cup_S H_2$ be a genus $\ge 2$ Heegaard splitting with a
strong $(D,F)$ pair. If $F$ has genus $g$ and $n$ boundary
components, then the distance $d(S)\le 2g+n$.
\end{theorem}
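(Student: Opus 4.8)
The plan is to realize the distance bound by writing down an explicit path of essential simple closed curves in $S$, of length at most $2g+n$, running from a curve that bounds a disk in $H_1$ to a curve that bounds a disk in $H_2$. For the $H_1$--end I take $\alpha_0=\partial D$. To manufacture the $H_2$--end I repeatedly boundary-compress $F$ inside $H_2$: set $F_0=F$, and whenever $F_i$ is essential and not a disk, boundary-compress it to obtain $F_{i+1}$. By Morimoto's Lemma 2.2 each $F_{i+1}$ is again essential with $\partial F_{i+1}\subset S$ (note $\partial F$ lies in the single boundary component $\partial_{+}H_2=S$, as required by the lemma), so the construction is legitimate at every stage and the terminal disk $F_k$ produces a curve $\partial F_k$ bounding a disk in $H_2$.

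Two observations drive the count. First, each boundary compression raises the Euler characteristic by exactly $1$, and in a handlebody --- more generally in $H_2$, for an incompressible surface whose boundary lies on $\partial_{+}H_2$ --- a surface that is not a disk is always boundary-compressible; hence the process continues until some $F_k$ is an essential disk. Since $\chi(F_0)=2-2g-n$ and $\chi(F_k)=1$, exactly $k=2g+n-1$ compressions occur. Second, a boundary compression changes $\partial F_i$ to $\partial F_{i+1}$ by a band move along an arc $b\subset S$ whose interior misses $\partial F_i$; thus $\partial F_{i+1}$ lies in a regular neighborhood of $\partial F_i\cup b$ and can be isotoped off $\partial F_i$. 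Consequently every component of $\partial F_{i+1}$ is disjoint from every component of $\partial F_i$, and choosing one essential component at each stage yields a path $\partial F_0,\partial F_1,\dots,\partial F_k$ of length $2g+n-1$ terminating at a curve bounding a disk in $H_2$.

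It remains to splice $\alpha_0=\partial D$ onto the front of this chain, and this is where the hypothesis $|D\cap F|=1$ is spent: $\partial D$ meets $\partial F$ in a single point, so it crosses exactly one component $c_1$ of $\partial F$ and is disjoint from the rest. If $n\ge 2$ this is immediate --- some component $c_2$ of $\partial F_0$ is already disjoint from $\partial D$, so I set $\alpha_1=c_2$ and follow the chain, for total length $1+(2g+n-1)=2g+n$. The delicate case is $n=1$, where $\partial F_0=c_1$ is forced to meet $\partial D$. Here I would perform the first boundary compression and use that cutting a surface with one boundary component along an essential arc yields a surface every component of which still has boundary, with an \emph{even} total boundary count, so $\partial F_1$ has at least two components; if the compressing disk is chosen so that its arc $b$ avoids $\partial D$, then the lone intersection point stays on a single component of $\partial F_1$ and another component of $\partial F_1$ is disjoint from $\partial D$. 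Starting the chain at that component gives total length $1+(2g+n-2)=2g\le 2g+n$.

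I expect the real obstacle to be precisely this first splicing when $n=1$. One must guarantee simultaneously that the first boundary-compressing disk can be taken with its $S$--arc disjoint from the fixed curve $\partial D$ (so the single intersection point is not smeared across several components), and that the compression genuinely raises the number of boundary components. Controlling the interaction of the compressing disk with $\partial D$, together with checking that the boundary-compression process never stalls at a non-disk essential surface before reaching a disk, are the two points needing genuine care; everything else is bookkeeping with Euler characteristics and the standard disjointness coming from the band move.
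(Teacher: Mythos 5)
Your overall strategy coincides with the paper's: iterate boundary compressions of $F$ in $H_2$ using Morimoto's lemma, get a chain of mutually disjoint essential curves $\partial F_0,\partial F_1,\dots$ ending at the boundary of an essential disk in $H_2$, bound the number of compressions by $2g+n-1$ via Euler characteristic, and splice $\partial D$ onto the front. The case $n\ge 2$ is handled exactly as in the paper. But in the case $n=1$ you have correctly located the obstacle and then left it unresolved: your argument requires that the first boundary-compressing disk $\Delta$ can be chosen so that its arc $\beta\subset S$ is disjoint from the fixed curve $\partial D$. Nothing guarantees this. The disk $\Delta$ arises from the intersection pattern of $F$ with a meridian system of $H_2$, its arc $\beta$ has both endpoints on $\partial F$ (which $\partial D$ already meets), and since $D$ lies in $H_1$ while $\Delta$ lies in $H_2$ there is no innermost/outermost argument available to remove intersections of $\beta$ with $\partial D$. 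So the step on which your $n=1$ count depends is an unjustified assumption, not a lemma. A telltale sign is that your proof never uses the hypothesis that the splitting has genus $\ge 2$.

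The paper's resolution of the $n=1$ case works at the $H_1$ end instead of the $H_2$ end: take two parallel copies of $D$ and band them together along $\beta_1=\partial F$ to form a new disk $D'\subset H_1$. Then $\partial D'$ is the boundary of a regular neighborhood of $\partial D\cup\beta_1$ in $S$, hence bounds a once-punctured torus in $S$; the genus $\ge 2$ hypothesis is exactly what makes $\partial D'$ essential in $S$, so $D'$ is an essential disk in $H_1$ with $\partial D'\cap\beta_1=\emptyset$. One then takes $\alpha_0=\partial D'$, $\alpha_1=\beta_1$, and continues with the boundary-compression chain with no constraint whatsoever on the position of the compressing arcs relative to $\partial D$. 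If you replace your treatment of $n=1$ with this banded-disk construction, the rest of your argument (the Euler characteristic count and the disjointness of consecutive $\partial F_i$ coming from the band move) goes through and reproduces the bound $d(S)\le 2g+n$.
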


\begin{proof}
Let $\partial F=\beta_1\cup\cdots\cup\beta_n$ and
$|D\cap\beta_1|=1$. Since $F$ is incompressible and not boundary
parallel and $\partial F\subset S$, $F$ intersects a meridian disk
system of $H_2$. By standard innermost disk and outermost arc
arguments, we may assume that there is a boundary compressing disk
$\Delta$ for $F$, where the boundary compression occurs toward
$S$. Let $\partial\Delta=\alpha\cup\beta$ where $\alpha$ is an
essential arc in $F$ and $\beta$ is an arc in $S$. We construct
sequence of essential simple closed curves
$\alpha_0,\cdots,\alpha_k$ with $\alpha_0$ bounding a disk in
$H_1$, $\alpha_k$ bounding a disk in $H_2$ and for each $1\le i\le
k$, $\alpha_{i-1}$ and $\alpha_i$ can be isotoped in $S$ to be
disjoint, dividing into two cases according to $n$.

Case $1$.\quad $n=1$.

Take two parallel copies of $D$ in $H_1$ and connect them with a
band along $\beta_1$ and push the band slightly into the interior
of $H_1$ to make a disk $D'\subset H_1$. Since $\partial D'$
bounds a once punctured torus in $S$ and $H_1\cup_S H_2$ is a
genus $\ge 2$ Heegaard splitting, $D'$ is an essential disk in
$H_1$. Note that $\partial D'$ is disjoint from $\beta_1$. Take
$\partial D'$ as $\alpha_0$ and $\beta_1$ as $\alpha_1$.

Case $2$.\quad $n>1$.

In this case, take $\partial D$ as $\alpha_0$ and any $\beta_i$
$(i\ne 1)$ as $\alpha_1$.

\vspace{0.3cm}

 Both in Case $1$ and
Case $2$, boundary compress $F$ along $\Delta$ to get an essential
surface $F_{(1)}$ by Lemma 2.2. All the boundary components of
$F_{(1)}$ can be made disjoint from $\partial F$. Take any
component of $\partial F_{(1)}$ as $\alpha_2$. Boundary compress
$F_{(1)}$ to get an essential surface $F_{(2)}$ by Lemma 2.2. All
the boundary components of $F_{(2)}$ can be made disjoint from
$\partial F_{(1)}$. Take any component of $\partial F_{(2)}$ as
$\alpha_3$. In this way, we successively boundary compress until
we get an essential disk in $H_2$ by Lemma 2.2. We can check that
the possible maximum number of boundary compressions is $2g+n-1$.
So the possible maximum length sequence of $\alpha_i$'\,s would be
$\alpha_0,\alpha_1,\cdots,\alpha_{2g+n}$. So we conclude that
$d(S)\le 2g+n$.
\end{proof}

\section{Obtaining new Heegaard splittings}

We consider attaching $F\times I$ to a handlebody along $\partial
F\times I$. Let $g(X)$ denote the genus of $X$.

\begin{lemma}
Let $\gamma_1.\cdots,\gamma_n$ be mutually disjoint loops on the
boundary of a handlebody $H$ and $D$ be an essential disk of $H$
such that $|\partial D\cap\gamma_1|=1$ and $\partial D\cap
\gamma_i=\emptyset$ $(i=2,\cdots,n)$. Let $F$ be a genus $g$
surface with $n$ $(n\ge 1)$ boundary components
$\beta_1,\cdots,\beta_n$.

If we attach $F\times I$ to $H$ along $\partial F\times I$ so that
$\beta_i\times I$ is attached to $N(\gamma_i;\partial
H)\cong\gamma_i\times I$ $(i=1,2,\cdots,n)$, then the resulting
manifold is a handlebody of genus $g(H)+2g+n-2$.
\end{lemma}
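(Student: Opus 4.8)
The plan is to realise the attachment of $F\times I$ to $H$ as a sequence of handle attachments and then read off the genus from the handle count (equivalently, from an Euler characteristic computation). The starting observation is that $F\times I$ is itself a handlebody: since $F$ has nonempty boundary it collapses onto a spine $\Gamma$, a wedge of $2g+n-1$ circles, and $F\times I$ collapses onto $\Gamma\times\{1/2\}$ and is a regular neighborhood of it, so $F\times I$ is a handlebody of genus $2g+n-1$. The $n$ gluing annuli $\beta_i\times I$ are exactly the boundary-collar annuli $\partial F\times I$ of this handlebody.

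First I would treat the generic case $\chi(F)<0$ (that is, $2g+n>2$, so $F$ is neither a disk nor an annulus). Choose a handle decomposition of $F$ relative to $\partial F$ built from disjoint collars $C_i=\beta_i\times[0,\epsilon]$ of the boundary circles together with $2g+n-2$ bands (one-handles of the surface) attached to the inner boundaries $\beta_i\times\{\epsilon\}$; the count is forced by $\chi(F)=\chi(\bigsqcup_i C_i)-(\#\text{bands})=-(\#\text{bands})$. Crossing with $I$ turns each collar $C_i$ into a solid torus $T_i=(\beta_i\times I)\times[0,\epsilon]$ and each band into a three-dimensional one-handle. I would first attach the $T_i$: the gluing annulus $\beta_i\times\{0\}\times I$ is identified with $\gamma_i\times I=N(\gamma_i;\partial H)$, and after reordering the product factors $T_i$ is precisely an external product collar $(\gamma_i\times I)\times[0,\epsilon]$ on the annulus $\gamma_i\times I\subset\partial H$. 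Attaching a product collar to a boundary subsurface does not change the homeomorphism type, and the annuli $\gamma_i\times I$ are disjoint, so $H\cup(\bigcup_i T_i)\cong H$. The bands now attach to this copy of $H$ along $2(2g+n-2)$ disks in its boundary, i.e.\ as $2g+n-2$ ordinary one-handles. Since attaching one-handles to a handlebody yields a handlebody, the result is a handlebody, and its genus is $g(H)+(2g+n-2)$. The same count drops out of $\chi=\chi(H)+\chi(F\times I)=(1-g(H))+(2-2g-n)$.

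What remains are the two degenerate cases, and this is where I expect the real work, and where the hypothesis $|\partial D\cap\gamma_1|=1$ finally gets used (it played no role above). If $F$ is a disk ($g=0$, $n=1$), then $F\times I$ is a ball glued to $H$ along the single annulus $\partial F\times I=\gamma_1\times I$, which is exactly a two-handle attached along $\gamma_1$. Here I would invoke $|\partial D\cap\gamma_1|=1$: the curve $\gamma_1$ meets the meridian $\partial D$ once, so it is primitive, $D$ is non-separating, and the two-handle cancels the one-handle dual to $D$, giving a handlebody of genus $g(H)-1=g(H)+2g+n-2$. If $F$ is an annulus ($g=0$, $n=2$), then $F\times I$ is a solid torus glued to $H$ along two longitudinal annuli $\gamma_1\times I$ and $\gamma_2\times I$; now the collars overlap and the collar-absorption argument does not apply directly. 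In this case the condition $|\partial D\cap\gamma_1|=1$ (with $\gamma_2$ disjoint from $\partial D$) is exactly the hypothesis under which the author's analysis of (disk, annulus) pairs in \cite{Lee} shows the result is again a handlebody, now of genus $g(H)=g(H)+2g+n-2$.

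The main obstacle is therefore not the generic case but the uniform handling of these boundary cases: verifying that the disk case produces a genuinely canceling two-handle (rather than an arbitrary, possibly handlebody-destroying, two-handle) and that the annulus case behaves, both of which hinge entirely on $|\partial D\cap\gamma_1|=1$. A secondary technical point to check in the generic case is the relative handle decomposition of $F$: one must verify that the boundary collars can be taken disjoint and that the bands can be attached away from the gluing region, so that the clean separation ``collars absorbed, bands become one-handles'' is legitimate.
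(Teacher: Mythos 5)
Your generic case rests on a handle decomposition of $F$ that does not exist: $F$ can never be written as the $n$ boundary collars $\bigsqcup_i C_i$ plus bands alone. The collars contribute $n$ ``inner'' boundary circles $\beta_i\times\{\epsilon\}$ in addition to $\partial F$, and an (orientation-preserving) band attached along arcs of these inner circles either merges two of them into one or splits one into two; it never reduces their number to zero. So ``collars $\cup$ bands'' always has more than $n$ boundary components and is a proper subsurface of $F$. Any decomposition of $F$ rel $\partial F$ must include at least one $2$-handle, and the correct count is $2g+n-1$ bands plus one $2$-handle (the dual of the standard decomposition of $F$ with one $0$-handle and $2g+n-1$ $1$-handles); your count $2g+n-2$ comes from balancing $\chi$ while forgetting the $+1$ of the $2$-handle. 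Crossing with $I$, that $2$-handle becomes a three-dimensional $2$-handle attached along an annulus, and attaching a $2$-handle to a handlebody yields a handlebody only when the attaching curve is primitive. This is exactly where $|\partial D\cap\gamma_1|=1$ must be used \emph{in the generic case as well}: your own analysis of the disk case (a $2$-handle over $\gamma_1$ cancelling the $1$-handle dual to $D$) is the mechanism that has to appear for every $F$, not just for $F=D^2$. That the gap is fatal and not cosmetic can be seen from the fact that your generic argument never invokes the hypothesis on $D$, yet the conclusion is false without it: take $H$ a solid torus, $\gamma_1$ a curve representing twice the core, and $F$ a once-punctured torus; then $\pi_1$ of the union is $\langle x,a,b\mid [a,b]=x^2\rangle$, which is not free, so the result is not a handlebody.

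The paper's proof is organized precisely to absorb this extra handle. It first removes the product neighborhood $D\times I$ from $H$ (here $|D\cap\gamma_1|=1$ guarantees $D$ is non-separating, so the genus drops to $g(H)-1$), then attaches $2g+n-2$ of the $2g+n-1$ arc-neighborhoods $a_i\times I\times I$ of a complete essential arc system of $F$ as $1$-handles, attaches the remaining ball $cl(F\times I-\bigcup a_i\times I\times I)$ as one more $1$-handle, and finally re-glues $D\times I$ together with $a_1\times I\times I=R'\times I$ as a $3$-ball meeting the boundary in a single disk, which costs no genus. The net count is $-1+(2g+n-2)+1+0=2g+n-2$. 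If you want to salvage your approach, you must carry the $2$-handle explicitly and show that its attaching circle is primitive in the handlebody built so far, using the disk $D$; at that point you have essentially reconstructed the paper's argument in the dual order.
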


\begin{proof}
Let $p$ be the intersection point $D\cap\gamma_1$. Consider the
neighborhood $D\times I$ in $H$ and $\gamma_1\times I$ in
$\partial H$. We can assume that $\partial(D\times
I)\cap(\gamma_1\times I)$ is a small rectangle $R$ containing $p$.
Let $R'$ be the rectangle in $\beta_1\times I$ that is attached to
$R$.

Since $F$ is a genus $g$ surface with $n$ boundary components,
there are mutually disjoint essential arcs $a_1,\cdots,
a_{2g+n-1}$ in $F$ such that $F$ cut along $a_1\cup\cdots\cup
a_{2g+n-1}$ is a disk. In particular, take such an essential arc
system so as to satisfy that one of the two points of $\partial
a_1$ is attached to $p$. More precisely, we take the rectangular
parallelepiped neighborhood $a_1\times I\times I$ of $a_1$ in
$F\times I$ to be equal to $R'\times I$.

Let $H'$ be $cl(H-(D\times I))$. Since $|D\cap\gamma_1|=1$, $D$ is
a non-separating essential disk in $H$. So $H'$ is a handlebody of
genus $g(H)-1$. Attach a rectangular parallelepiped neighborhood
$a_i\times I\times I$ of $a_i$ taken in $F\times I$ to $H'$ along
$\partial a_i\times I\times I$ for each $i=2,\cdots,2g+n-1$. Since
each $a_i\times I\times I$ $(i=2,\cdots,2g+n-1)$ can be considered
as a $1$-handle, the resulting manifold $H''$ is a genus
$g(H)+2g+n-3$ handlebody.

Observe that $cl(F\times I-(\bigcup^{2g+n-1}_{i=1}a_i\times
I\times I))$ is homeomorphic to a $3$-ball $B$, which is attached
to $H''$ along two subdisks of its boundary. Then $H'''=H''\cup B$
is a handlebody of genus $g(H)+2g+n-2$. Observe also that
$(D\times I)\cup (R'\times I)$ is a $3$-ball attached to $H'''$
along $(D\times \partial I)\cup({\text{three faces of}}
\,\,\partial(R'\times I))$, which is a disk on the boundary of a
$3$-ball. So the genus remains unchanged after attaching $(D\times
I)\cup(R'\times I)$ to $H'''$. Hence we conclude that the
resulting manifold after attahcing $F\times I$ to $H$ along
$\partial F\times I$ is a genus $g(H)+2g+n-2$ handlebody.
\end{proof}

Now we consider removing a neighborhood of incompressible surface
from a compressionbody. The following lemma is well-known. It can
be found, for example, as Lemma 2 in Schulten's paper
\cite{Schultens}.

\begin{lemma}
Let $F$ $(\partial F\ne\emptyset)$ be an incompressible surface
properly embedded in a compressionbody $H$ with $\partial
F\subset\partial_{+}H$. Then $F$ cuts $H$ into
compressionbodies(or a compressionbody).
\end{lemma}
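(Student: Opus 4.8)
The plan is to use the standard description of a compressionbody by a complete meridian disk system and to track how such a system behaves when we cut along $F$. Recall that $H$ being a compressionbody means there is a collection $\Delta=E_1\cup\cdots\cup E_k$ of disjoint essential disks with $\partial E_i\subset\partial_+H$ such that $H$ cut along $\Delta$ is homeomorphic to $\partial_-H\times I$ together with some $3$-balls (a single $3$-ball when $\partial_-H=\emptyset$). First I would put $F$ in general position with respect to $\Delta$ and isotope it to minimize $|F\cap\Delta|$. Since $F$ is incompressible and a compressionbody is irreducible, a routine innermost-disk argument removes every circle of $F\cap\Delta$: an innermost circle on $\Delta$ bounds a disk there, incompressibility of $F$ supplies a matching disk on $F$, and irreducibility lets me isotope $F$ across the resulting ball. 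Hence I may assume $F\cap\Delta$ consists of arcs only, and by the same minimality each piece of $F$ cut along $\Delta$ is incompressible in the relevant piece of $H$ cut along $\Delta$.

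The key observation is then that, after this reduction, the disks of $\Delta$ descend to a complete meridian system for $H$ cut along $F$. Concretely, cutting each $E_i$ along the arcs $F\cap E_i$ produces a collection of subdisks, and I would check that these subdisks are properly embedded in $H$ cut along $F$ with boundary lying in the $\partial_+$ part of the cut-open manifold. This is where the hypothesis $\partial F\subset\partial_+H$ is essential: since $F$ meets the boundary of $H$ only along $\partial_+H$, in each component $Q$ of $H$ cut along $F$ the copies of $F$ glue to the remnants of $\partial_+H$ along $\partial F$ to form one surface, which I will designate $\partial_+Q$, while the remnants of $\partial_-H$ become $\partial_-Q$. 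The boundary of each subdisk, being composed of an arc on $\partial E_i\subset\partial_+H$ and an arc on a copy of $F$, therefore lies entirely in $\partial_+Q$, so the subdisks are genuine meridian disks.

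It then remains to verify that cutting $H$ along both $F$ and $\Delta$ yields products over subsurfaces of $\partial_-H$ together with $3$-balls, which is precisely the statement that each $Q$ is a compressionbody. The $3$-ball pieces of $H$ cut along $\Delta$ meet $F$ in incompressible surfaces that must be disks (an incompressible properly embedded surface in a ball is a disk), and cutting a ball along disks gives balls, so those account for the $3$-ball part. The main obstacle is the product region $\partial_-H\times I$: I must show that the pieces of $F$ lying in it, which are incompressible and have boundary on $\partial_-H\times\{1\}$ together with arcs on the vertical scars of the $1$-handles, cut $\partial_-H\times I$ into products over subsurfaces of $\partial_-H$, with no new $1$-handles created. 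This is the heart of the argument and is where I expect to lean on the structure of incompressible surfaces in a product region, either via a Waldhausen-type normal form or via a finite sequence of boundary compressions toward $\partial_+$, using Lemma 2.2 to keep the intermediate surfaces essential. Once this product case is settled, reassembling along $\Delta$ exhibits each $Q$ as $\partial_-Q\times I$ with $1$-handles attached, completing the proof.
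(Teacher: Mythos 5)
The paper does not actually prove this lemma: it is quoted as well-known and attributed to Lemma~2 of Schultens \cite{Schultens}, so there is no in-paper argument to match your proposal against. Your outline is the standard route one would take to prove it from scratch, and its framework is sound: the reduction to a complete meridian system $\Delta$, the removal of circles of $F\cap\Delta$ by incompressibility plus irreducibility, the observation that the remaining pieces of $F$ are incompressible in the pieces of $H$ cut along $\Delta$, the disposal of the $3$-ball pieces, and the reassembly of each component $Q$ by reattaching the subdisks of $\Delta$ as $1$-handles are all correct.

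The difficulty is that the step you yourself identify as ``the heart of the argument'' is left as an expectation rather than an argument, and it is the only place where the lemma has real content. Concretely, you still owe a proof that a properly embedded incompressible surface in $\Sigma\times I$ ($\Sigma$ a closed component of $\partial_{-}H$) with boundary contained in $\Sigma\times\{1\}$ is parallel into $\Sigma\times\{1\}$, and moreover that a disjoint family of such surfaces \emph{simultaneously} decomposes $\Sigma\times I$ into product pieces together with one copy of $\Sigma\times I$ (individual parallelism of each piece is not literally the same statement; one needs an innermost-product-region induction, using that parallelism regions of disjoint surfaces in an irreducible manifold are nested or disjoint). The tools you name do suffice to close this: such a piece cannot be vertical (its boundary misses $\Sigma\times\{0\}$) or horizontal (it has boundary), so by the classification of essential surfaces in $I$-bundles it is either $\partial$-parallel or $\partial$-compressible; in the latter case boundary-compress and invoke Lemma~2.2 to retain an essential component, and since complexity strictly drops one eventually reaches disks, which are all $\partial$-parallel in $\Sigma\times I$, a contradiction unless the original piece was $\partial$-parallel. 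You should also say a word about why $\partial_{+}Q$ (the copies of $F$ glued to the remnants of $\partial_{+}H$) is connected for each component $Q$, which is needed for $Q$ to be a compressionbody in the usual sense. As it stands the proposal is a correct plan with its central claim unproved.
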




Using Lemma 3.1 and Lemma 3.2, we give a proof of Theorem 1.1.

\begin{proof}
{\it{(of Theorem 1.1.)}} \,By Lemma 3.1, $H'_1$ is a handlebody of
genus $g(S)+2g+n-2$. By Lemma 3.2, $H'_2$ is union of two
compressionbodies or a compressionbody. Since $\partial H'_1$ is
same as $\partial_{+} H'_2$, $H'_2$ is connected. Therefore
$H'_1\cup_{S'}H'_2$ is a Heegaard splitting of genus
$g(S)+2g+n-2$.
\end{proof}

\begin{corollary}
Let $H_1\cup_S H_2$ be a Heegaard splitting of a $3$-manifold $M$
with a strong $(D,A)$ pair where $A$ is an annulus. Let $H_1'$ be
obtained from $H_1$ by attaching $A\times I\subset H_2$ along
$\partial A\times I$ and $H_2'$ be obtained from $H_2$ by cutting
along $A$. Then $H_1'\cup_{S'} H_2'$ is a Heegaard splitting of
same genus with $H_1\cup_S H_2$.
\end{corollary}

\begin{remark}
Corollary 3.3 is a generalization of Definition 14 of
\cite{Schultens}.
\end{remark}

 A Heegaard splitting $H_1\cup_S H_2$ is said to have the
 {\it{disjoint curve property}} (\cite{Thompson}) if there are essential
 disks $D_1\subset H_1$, $D_2\subset H_2$ and an essential loop
 $\gamma\subset S$ such that $(\partial D_1\cup \partial D_2)\cap
 \gamma=\emptyset$. It is equivalent to that the distance $d(S)$
 is less than or equal to two.
 The newly obtained Heegaard
 splitting $H'_1\cup_{S'}H'_2$ of Theorem 1.1 has the disjoint curve
 property if the switch from $H_1\cup_S H_2$ to $H'_1\cup_{S'}H'_2$
 is not a destabilization.

\begin{theorem}
If $F$ is not a disk, the Heegaard splitting $H'_1\cup_{S'} H'_2$
obtained in Theorem 1.1 has the disjoint curve property.
\end{theorem}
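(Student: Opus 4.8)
The plan is to exhibit an essential disk $D_1\subset H_1'$, an essential disk $D_2\subset H_2'$, and an essential simple closed curve $\gamma\subset S'$ with $\gamma\cap(\partial D_1\cup\partial D_2)=\emptyset$; by the remark preceding the theorem this is exactly the disjoint curve property. Throughout I use the description of the construction from Theorem 1.1: the Heegaard surface $S'$ contains two parallel copies $F_0=F\times\{0\}$ and $F_1=F\times\{1\}$ of $F$, the product region $F\times I$ lies on the $H_1'$ side, and $H_2'$ is $H_2$ cut along $F$. Since the genus of $S'$ is $g(S)+2g+n-2\ge g(S)\ge 2$, the complement in $S'$ of a pair of simple closed curves is ``large'', and this is what will give room to locate $\gamma$.

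First I would produce $D_1$. Because $F$ is not a disk, it carries an essential (non $\partial$-parallel) arc $\alpha$, which I choose disjoint from the point $p=D\cap F$. The vertical disk $\alpha\times I\subset F\times I$ is then a properly embedded disk in $H_1'$, and since $\alpha$ is essential in $F$ it is not $\partial$-parallel in the product $F\times I$; hence $D_1:=\alpha\times I$ is an essential disk of $H_1'$, meeting $F_0$ in the single arc $\alpha\times\{0\}$ and meeting $F_1$ in $\alpha\times\{1\}$. Next I would produce $D_2$. By Lemma 3.2 the manifold $H_2'$ is a connected compressionbody with $\partial_{+}H_2'=S'$, and it is not a product since $S'$ has genus at least $2$; so it has a compressing disk $E$. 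I would isotope $\partial E$ to meet the two copies $F_0,F_1$ in as few arcs as possible, using the incompressibility of $F$ in $H_2$ (Proposition 2.1) to remove closed curves of intersection and boundary compressions to reduce arcs, and set $D_2:=E$. The goal is that $\partial D_2\cap F_0$ be a controlled (possibly empty) collection of arcs, so that $\partial D_1\cup\partial D_2$ does not fill the copy $F_0$.

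Finally I would choose $\gamma$. Since $F$ is not a disk, the copy $F_0\subset S'$ is not a disk, so after deleting the finitely many arcs of $\partial D_1\cap F_0$ and $\partial D_2\cap F_0$ there remains an essential simple closed curve $\gamma\subset F_0$ disjoint from $\partial D_1\cup\partial D_2$: when $g\ge 1$ I would take $\gamma$ non-separating in $F_0$, and when $g=0$ (so $n\ge 2$) I would take $\gamma$ parallel to a component of $\partial F_0$ other than the one met by $\partial D$. It then remains to check that $\gamma$ is essential in the \emph{closed} surface $S'$, i.e.\ not null-homotopic there; this should follow from $F$ being essential in $H_2$, which makes the copies of $F$ incompressible in $S'$ and prevents an essential curve of $F_0$ from bounding a disk in $S'$. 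With $D_1$, $D_2$, and $\gamma$ in hand, the relation $(\partial D_1\cup\partial D_2)\cap\gamma=\emptyset$ yields the disjoint curve property.

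I expect the main obstacle to be the simultaneous control of $D_2$ and the essentiality of $\gamma$: one must guarantee that a compressing disk of $H_2'$ can be chosen meeting $F_0$ sparsely enough that an essential curve of $F_0$ survives in the complement of $\partial D_1\cup\partial D_2$, and that this surviving curve is genuinely essential in $S'$ rather than bounding a disk on either side of $S'$. The hypothesis that $F$ is not a disk enters precisely here, since it is what makes $F_0$ carry an essential curve at all and, through Proposition 2.1, what keeps that curve from compressing; when $F$ is a disk the switch is a destabilization and no such $\gamma$ is available.
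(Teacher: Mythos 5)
Your overall strategy --- exhibit essential disks $D_1\subset H_1'$, $D_2\subset H_2'$ and an essential loop $\gamma\subset S'$ missing both boundaries --- is the same as the paper's, but two of your three objects do not work as constructed. First, $D_1=\alpha\times I$ is not a properly embedded disk in $H_1'$: its boundary is $(\alpha\times\partial I)\cup(\partial\alpha\times I)$, and the two vertical edges $\partial\alpha\times I$ lie on the annuli $\partial F\times I$ along which $F\times I$ is glued to $\partial H_1$; after the gluing these annuli are in the \emph{interior} of $H_1'$, not on $S'$. So the essentiality of $\alpha$ in $F$ (and non-$\partial$-parallelism in the product $F\times I$) is beside the point until you complete $\alpha\times I$ through $H_1$ to a properly embedded disk, and then essentiality must be re-argued for the completed disk. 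The paper sidesteps this by taking $E'$ to be two parallel copies of $D$ joined by a band $a\times I$ in $F\times I$, where $a$ is an arc of $F$ with both endpoints on the boundary component met by $D$; this is the co-core of the $1$-handle appearing in the proof of Lemma 3.1, hence properly embedded and essential.

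Second, and more seriously, your $D_2$ is an arbitrary compressing disk of $H_2'$ whose intersection with $F_0\cup F_1$ is merely ``minimized.'' Nothing prevents the arcs of $\partial D_2\cap F_0$, together with $\alpha\times\{0\}$, from cutting $F_0$ into disks, in which case every simple closed curve of $F_0$ in their complement bounds a disk in $S'$ and your $\gamma$ does not exist; you correctly identify this as the main obstacle but do not resolve it. (Also, ``$H_2'$ is not a product since $S'$ has genus at least $2$'' is not an argument --- trivial compressionbodies over high-genus surfaces exist.) The paper's key idea, which is what your proposal is missing, is to use the two copies of $F$ in $S'$ \emph{asymmetrically}: take $D_2$ to be a boundary-compressing disk $\Delta$ for $F$ in $H_2$ (it exists because $F$ is essential, and after cutting along $F$ it becomes an essential disk of $H_2'$ meeting only $F\times\{0\}$, in a single essential arc), and place $\gamma$ on the other copy $F\times\{1\}$, where it need only avoid the band $a\times\{1\}$ of $E'$ --- possible precisely because $F$ is not a disk. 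With that separation the disjointness is automatic and no control of intersection patterns is needed. Your final observation, that incompressibility of $F$ in $H_2$ forces an essential curve of $F$ to remain essential in $S'$, is correct and coincides with the paper's.
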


\begin{figure}[h]
    \centerline{\includegraphics[width=7cm]{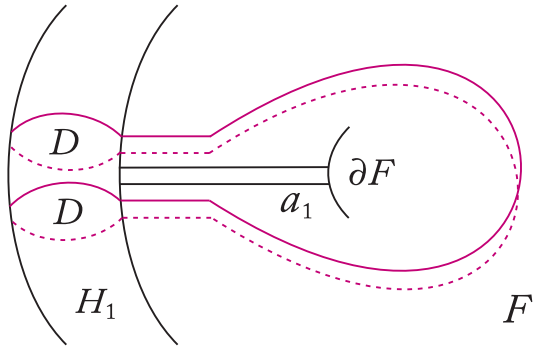}}
    \caption{An essential disk $E'$ in $H'_1$ satisfying the disjoint
    curve property}\label{fig2}
\end{figure}

\begin{proof}
Recall the proof of Lemma 3.1. In the proof of Lemma 3.1, $H'''$
was obtained from $H''$ by attaching a $1$-handle. Consider a
meridian disk (co-core) $E$ of the $1$-handle. Also remember that
$H'_1=H_1\cup (F\times I)$ was obtained from $H'''$ by attaching a
$3$-ball along a $2$-disk on its boundary. Then $E$ is enlarged to
an essential disk $E'$ in $H'_1$, which can be taken as two
parallel copies of $D$ attached by a band in $F\times I$. See Fig.
2. Since the band is equivalent to an (arc)$\times I$ in $F\times
I$ with both endpoints of the arc in the same component of
$\partial F$, we can take an essential loop $\gamma\subset F$
which is disjoint from $E'$. Since $F$ is incompressible in $H_2$,
we can see that $\gamma$ is an essential loop in the new Heegaard
surface $S'$. Take a boundary compressing disk $\Delta\subset H_2$
for $F$. Let $\partial\Delta=\alpha\cup\beta$ where $\alpha$ is an
essential arc in $F$. Then after cutting $H_2$ along $F$, $\Delta$
is an essential disk in $H'_2$. We may assume that $\alpha$
belongs to $F\times\{0\}$ and $\gamma$ belongs to $F\times\{1\}$.
So $\Delta$ is disjoint from $\gamma$. So we conclude that the
triple $(E',\Delta,\gamma)$ satisfies the disjoint curve property.
\end{proof}

Now we consider the case when $F$ is a spanning annulus in a
compressionbody. Given a compressionbody $H$ with
$\partial_{-}H\ne\emptyset$, there exists a meridian disk system
$\{D_1,\cdots,D_k\}$ of $H$ such that $H$ cut along
$\bigcup^{k}_{i=1}D_i$ is $\partial_{-}H\times I\,\,\cup$
(possibly empty) $3$-balls. A {\it spanning annulus} of a
compressionbody $H$ is an annulus which can be expressed as
$\gamma\times I$ in the compressionbody structure, where $\gamma$
is an essential loop in $\partial_{-}H$. By showing the analogue
of Lemma 3.1 and Lemma 3.2, we can state the following.

\begin{figure}[h]
    \centerline{\includegraphics[width=8cm]{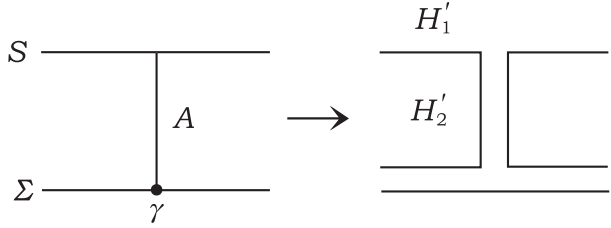}}
    \caption{Compressing along a spanning annulus}\label{fig3}
\end{figure}

\begin{theorem}
Let $H_1\cup_S H_2$ be a Heegaard splitting of a $3$-manifold $M$
with a strong $(D,A)$ pair where $A$ is a spanning annulus.
 Let $A=\gamma\times I$ in the compressionbody structure and
$\gamma\subset\Sigma\subset\partial_{-}H_2$ and $\Sigma$ has genus
$g$. Let $H_1'$ and $H_2'$ be obtained as follows (Fig. 3).

\begin{itemize}
\item $H_1'$ is obtained from $H_1$ by attaching $A\times I$ along
$(\partial A-\gamma)\times I$ and attaching $\Sigma\times I$ along
$\gamma\times I$.
 \item $H_2'$ is obtained from $H_2$ by cutting along $A$ and
 shrinking the part adjacent to $\Sigma$
\end{itemize}

 Then $H_1'\cup_{S'} H_2'$ is a Heegaard splitting of
genus $g(S)+g-1$. In particular, if $\Sigma$ is a torus, it has
same genus with $H_1\cup_S H_2$.
\end{theorem}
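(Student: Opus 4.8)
The plan is to mirror the proof of Theorem 1.1, establishing spanning-annulus analogues of Lemma 3.1 and Lemma 3.2 and then gluing the two pieces along their common positive boundary. The essential new feature, compared with Theorem 1.1, is that $H_1'$ will \emph{not} be a handlebody: the genus $g$ component $\Sigma$ of $\partial_-H_2$ is transferred across the annulus to become the negative boundary of $H_1'$, so $H_1'$ is a compressionbody with $\partial_-H_1'=\Sigma$. I would first record an Euler-characteristic sanity check. Since $A\times I$ is a solid torus glued along an annulus and $\Sigma\times I$ is a collar of $\Sigma$, all the gluing regions are annuli (Euler characteristic $0$), so $\chi(H_1')=(1-g(S))+\chi(\Sigma)=3-g(S)-2g$. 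This is exactly the Euler characteristic of a compressionbody whose negative boundary is $\Sigma$ and whose positive boundary has genus $g(S)+g-1$, since for such a compressionbody $\chi=\tfrac12(\chi(\partial_+)+\chi(\partial_-))$ forces the positive genus to be $g(S)+g-1$; when $\Sigma$ is a torus this is $g(S)$, matching the final clause.

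The analogue of Lemma 3.1 is the heart of the matter. Following the proof of Lemma 3.1, I would use the point $p=D\cap A$ and the hypothesis $|\partial D\cap\gamma_+|=1$, where $\gamma_+=\partial A-\gamma$ is the component of $\partial A$ lying on $S$, to locate the rectangle $R'\subset A\times I$ along which the attaching takes place. Because $\partial D$ meets $\gamma_+$ exactly once, $D$ is non-separating, so removing $D\times I$ leaves a genus $g(S)-1$ handlebody $H_1^{\ast}$. I would then rebuild, attaching $A\times I$ and organizing $\Sigma\times I$ as the collar of $\Sigma$ together with one-handles coming from an essential arc system of $\Sigma$ cut along $\gamma$. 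The step I expect to require the most care is showing that the handle of $H_1$ dual to $D$ is absorbed into the product structure $\Sigma\times I$ — that is, that after the attachment this handle becomes vertical and no longer contributes to the genus — so that exactly $g(S)-1$ one-handles survive over the base $\Sigma\times I$. This exhibits $H_1'$ as $\Sigma\times I$ with $g(S)-1$ one-handles attached, a compressionbody whose positive boundary has genus $g+(g(S)-1)=g(S)+g-1$ and whose negative boundary is $\Sigma$.

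For the analogue of Lemma 3.2 I would show that cutting $H_2$ along the spanning annulus $A$ and shrinking the collar adjacent to $\Sigma$ leaves a compressionbody $H_2'$ with $\partial_-H_2'=\partial_-H_2-\Sigma$. Since $A=\gamma\times I$ is vertical in the compressionbody structure of $H_2$, cutting along it splits only the $\Sigma\times I$ collar along $\gamma\times I$ and leaves the one-handle part of $H_2$ intact; after discarding the transferred collar, what remains is built from $(\partial_-H_2-\Sigma)\times I$ by attaching one-handles, hence is a compressionbody, a priori possibly disconnected. This is the spanning-annulus counterpart of Lemma 3.2, carried out in the $\partial_-$ rather than the $\partial_+$ setting.

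Finally I would assemble the two pieces exactly as in the proof of Theorem 1.1: $\partial_+H_1'=S'=\partial_+H_2'$, and since this common surface is connected, $H_2'$ is forced to be connected, so $H_1'\cup_{S'}H_2'$ is a genuine Heegaard splitting. Its genus is $g(\partial_+H_1')=g(S)+g-1$, and specializing to $\Sigma$ a torus gives $g=1$ and hence the same genus $g(S)$ as the original splitting. The main obstacle throughout is the Lemma 3.1 analogue: correctly using the single intersection point so that exactly one handle is converted into collar. This conversion, together with $\Sigma$ contributing genus $g$ once (as the negative boundary of the compressionbody $H_1'$) rather than $2g$ (as it would if absorbed into a handlebody), is precisely what produces the increment $g-1$.
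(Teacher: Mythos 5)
Your proposal is correct and takes essentially the same route as the paper, whose proof of this theorem is just the instruction to repeat the procedure of Lemma 3.1 while observing that $\Sigma$ migrates to $\partial_-H_1'$: your key mechanism --- absorbing the handle of $H_1$ dual to $D$ into the collar so that $H_1'$ becomes $\Sigma\times I$ with $g(S)-1$ one-handles attached, with the Euler-characteristic computation as an independent check of the genus $g(S)+g-1$ --- is exactly what that procedure amounts to here. The one loose sentence is the aside about an ``essential arc system of $\Sigma$ cut along $\gamma$'': if you literally decomposed the transferred collar into one-handles as in Lemma 3.1 you would count the genus of $\Sigma$ twice (yielding $g(S)+2g-2$), but since you instead keep $\Sigma\times I$ intact as the product part of a compressionbody with $\partial_-H_1'=\Sigma$, your final count is the correct one.
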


\begin{proof}
By following the procedure as in the proof of Lemma 3.1, we can
see that $H_1'$ is a compressionbody of genus $g(S)+g-1$. One
remarkable point is that the partition of components of $\partial
M$ is changed for the Heegaard splitting --- $\Sigma$ belongs to
$\partial_{-}H_2$ before the change and $\partial_{-}H_1'$ after
the change.

By cutting $H_2$ along $A$, $H_2'$ becomes a genus $g(S)+g-1$
compressionbody and parts of $\Sigma$ is connected to
$\partial_{+}H_2$. We conclude that $H_1'\cup_{S'} H_2'$ is a
Heegaard splitting of genus $g(S)+g-1$.
\end{proof}

\section{Examples}
 Let $K$ be a knot admitting a minimal genus free Seifert surface
 $F$ of genus $g$. Then $F$ is incompressible in $cl(S^3-N(F))$.
 Also $F$ is incompressible in the product neighborhood $N(F)=F\times I$.
 Since $N(F)$ and $cl(S^3-N(F))$ are handlebodies, this gives a Heegaard
 splitting $N(F)\cup_{\Sigma} cl(S^3-N(F))$ of $S^3$.

 Now we are going to construct a strongly irreducible Heegaard
 splitting from $\Sigma$ by Dehn surgery on $K$. Remove a
 neighborhood $N(K)$ from $S^3$. Let $K(1/q)$ denote the manifold
 obtained by $1/q$-filling on $cl(S^3-N(K))$. We can assume that the
 filling solid torus $T$ is attached to $N(F)=F\times I$ along an
 annulus $\partial F\times I$. Note that if we perform $1/q$-filling,
 a meridian curve $(1,0)$ of the filling solid torus is mapped to $(1,q)$
 curve and longitude $(0,1)$ of filling solid torus is mapped to
longitude $(0,1)$. So $N(F)\cup T$ is a handlebody. Then we get
the Heegaard splitting $(N(F)\cup T)\cup_{\Sigma'}cl(S^3-N(F))$
for $K(1/q)$. (Alternatively, we can regard $\Sigma'$ is obtained
from $\Sigma$ by Dehn twists on $K$ $|q|$ times.)
 By a theorem due to Casson and Gordon \cite{Casson-Gordon},
 $\Sigma'$ is a strongly irreducible Heegaard splitting if $|q|\ge
 6$. Here we refer the statements in (\cite{Moriah-Schultens}, Appendix).

 \begin{theorem}(Casson-Gordon)
 Suppose $M=H_1\cup_{\Sigma} H_2$ is a weakly reducible Heegaard splitting
 for the closed manifold $M$. Let $K$ be a simple closed curve in
 $\Sigma$ such that $\Sigma-N(K)$ is incompressible in both $H_1$
 and $H_2$. Then $\Sigma'$, for all $|q|\ge 6$, is a strongly
 irreducible Heegaard splitting for the Dehn filled manifold $M(1/q)$.
 \end{theorem}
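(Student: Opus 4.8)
The plan is to argue by contradiction, translating a hypothetical weak reduction of $\Sigma'$ into a statement about simple closed curves on the fixed surface $\Sigma$ and the Dehn twist $\tau_K$ along $K$. Since $H_1$ and $H_2$ are unchanged and only the gluing is modified, the $1/q$-filling realizes $\Sigma'$ as the splitting in which a meridian $b$ of $H_2$ is attached to $\partial H_1=\Sigma$ as $\tau_K^{q}(b)$, while meridians $a$ of $H_1$ are unchanged. Thus $\Sigma'$ being weakly reducible means there is a meridian $a$ of $H_1$ and a meridian $b$ of $H_2$ whose images are disjoint in $\Sigma'$, i.e.\ $i(\tau_K^{-q}(a),b)=0$, where $i(\,\cdot\,,\,\cdot\,)$ denotes geometric intersection number on $\Sigma$. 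My goal is to show that no such pair can exist once $|q|\ge 6$.

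First I would pin down the role of the incompressibility hypothesis. If $\Sigma-N(K)$ is incompressible in $H_1$, then any meridian of $H_1$ disjoint from $K$ is inessential in $\Sigma-N(K)$, hence (being essential in $\Sigma$) isotopic to $\partial N(K)$, i.e.\ to $K$ itself; the same holds for $H_2$. Reading the hypothesis so that $K$ is moreover not a meridian of either handlebody --- otherwise $K$ would bound disks on both sides, $\tau_K$ would fix them, and $\Sigma'$ would be reducible, contradicting the desired conclusion --- I conclude that every meridian $a$ of $H_1$ and every meridian $b$ of $H_2$ must cross $K$: $i(a,K)\ge 1$ and $i(b,K)\ge 1$. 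This is precisely what guarantees that twisting affects every compressing disk.

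The quantitative engine is the standard Dehn-twist intersection inequality $i(\tau_K^{-q}(a),b)\ge |q|\,i(a,K)\,i(b,K)-i(a,b)$, valid for curves in minimal position. Combined with $i(\tau_K^{-q}(a),b)=0$ and $i(a,K),i(b,K)\ge 1$ it yields $i(a,b)\ge |q|\ge 6$. By itself this is \emph{not} a contradiction, and here lies the main obstacle: the wrapping intersections created inside the twisting annulus $A=N(K)$ could in principle be cancelled against the $i(a,b)$ intersections lying outside $A$, so the crude estimate cannot be closed without controlling that cancellation term.

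To close the gap I would put the weak reduction in efficient position with respect to $A$ and analyze the arcs of $a$ and of $b$ crossing $A$. After the twist, each of the $i(a,K)$ arcs of $\tau_K^{-q}(a)$ winds $|q|$ times across $A$, and I would show, by an innermost-disk/outermost-arc surgery on the weak-reduction disks --- choosing among all weak reductions of $\Sigma'$ one that minimizes $i(a,b)$ together with $i(a,K)+i(b,K)$ --- that these windings cannot all be undone, forcing $i(\tau_K^{-q}(a),b)>0$. The constant $6$ would emerge from this combinatorial count, matching the bound obtained in the Rubinstein--Scharlemann sweep-out treatment, which supplies a more robust route to the same conclusion by comparing the two sweep-outs of $M(1/q)$ induced by $\Sigma'$ and by the filling torus. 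I expect the control of the cancellation term --- equivalently, bounding $i(a,b)$ for an extremal weak reduction --- to be the technical heart of the argument, with the incompressibility hypothesis supplying the indispensable fact that no compressing disk of either handlebody can be made disjoint from $K$.
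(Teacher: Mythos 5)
The first thing to note is that the paper does not prove this statement at all: Theorem 4.1 is an unpublished result of Casson and Gordon, quoted verbatim from the appendix of Moriah--Schultens, and the proof that follows it in the text is a proof of Theorem 1.2, not of Theorem 4.1. So there is no in-paper argument to compare yours against; you are attempting to reconstruct the Casson--Gordon proof itself.

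Your setup is correct as far as it goes: the $1/q$-filling does amount to regluing $H_1$ to $H_2$ by $\tau_K^{q}$ composed with the original gluing, the incompressibility of $\Sigma-N(K)$ does force $i(a,K)\ge 1$ and $i(b,K)\ge 1$ for every pair of meridians (and, as you can check, it already rules out $K$ being a meridian of either side, since a parallel copy of $K$ in $\Sigma-N(K)$ would be a compression), and the inequality $i(\tau_K^{q}(b),a)\ge |q|\,i(a,K)\,i(b,K)-i(a,b)$ is the right first estimate. But, as you yourself concede, this only yields $i(a,b)\ge|q|$ for a hypothetical weak reduction of $\Sigma'$, which is not a contradiction: there is no a priori upper bound on $i(a,b)$ over pairs of meridians, and the twist inequality is sharp in general, so cancellation inside the annulus genuinely can occur for arbitrary curves. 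Everything that makes the theorem true --- why the incompressibility hypothesis prevents the cancellation, and where the constant $6$ comes from --- is deferred to an ``innermost-disk/outermost-arc surgery'' that you describe only as a plan. That deferred step is the entire technical content of the theorem; choosing a weak reduction minimizing $i(a,b)$ and $i(a,K)+i(b,K)$ is a reasonable starting point, but you give no argument that such a minimal position leads to a contradiction, and no computation producing the threshold $6$. As it stands the proposal is an accurate reduction of the problem to its hard part, not a proof; to complete it you would need to carry out the combinatorial analysis of how $\partial D_1$ and $\partial D_2$ meet the annulus $N(K)$ (or substitute the sweep-out argument you allude to), which is precisely what the cited appendix of Moriah--Schultens supplies.
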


\begin{proof} {\it{(of Theorem 1.2.)}}
$K(1/q)$ has a genus $2g$ Heegaard splitting $(N(F)\cup
T)\cup_{\Sigma'}cl(S^3-N(F))$. The strong irreducibility of it is
already shown above.

 Note that $cl(S^3-N(K\cup (\bigcup^{t}_{i=1} t_i)))$ is a genus
 $t+1$ handlebody and this
 handlebody remains untouched during the $1/q$-surgery on $K$.
 So $(T\cup (\bigcup^{t}_{i=1} N(t_i)))\cup_{S'} cl(S^3-N(K\cup(\bigcup^{t}_{i=1} t_i)))$
 is a genus $t+1$ Heegaard splitting for the Dehn filled manifold
$K(1/q)$.

 By assumption of Theorem 1.2, every tunnels of an unknotting tunnel system
 $\{t_1,t_2,\cdots,t_t\}$ for $K$ can be isotoped to lie on $F$ and
 mutually disjoint, and $\bigcup^t_{i=1} t_i$ cuts $F$ into a
 connected subsurface $F'\subset F$.
 Since $F'$ is a subsurface of
 $F$, $F'$ is incompressible in $cl(S^3-N(K\cup (\bigcup^{t}_{i=1}
 t_i)))$.
We can see that the meridian
 of the filling solid torus $T$ intersect $F'$ in one point since
 the determinant of the matrix
 $(\begin{smallmatrix}
 1 & 0 \\
 q & 1
 \end{smallmatrix})$
 is $1$. So the genus $t+1$ Heegaard splitting
$(T\cup (\bigcup^{t}_{i=1} N(t_i)))\cup_{S'}
cl(S^3-N(K\cup(\bigcup^{t}_{i=1} t_i)))$ has a strong $(D,F')$
pair. Since $t+1<2g$, we can see that the genus $2g$ strongly
irreducible non-minimal genus splitting $(N(F)\cup
T)\cup_{\Sigma'}cl(S^3-N(F))$ and the genus $t+1$ splitting
$(T\cup (\bigcup^{t}_{i=1} N(t_i)))\cup_{S'}
cl(S^3-N(K\cup(\bigcup^{t}_{i=1} t_i)))$ is related by the
construction as in Theorem 1.1.
\end{proof}

\begin{figure}[h]
    \centerline{\includegraphics[width=5cm]{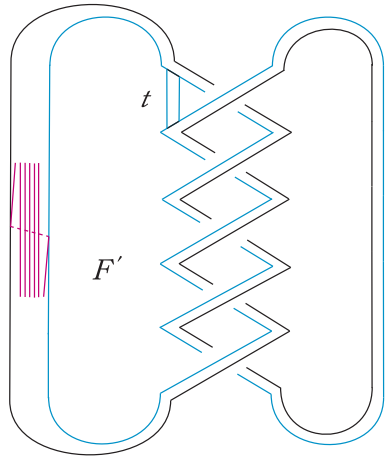}}
    \caption{Genus $t+1$ Heegaard splitting with a strong $(D,F')$
    pair}\label{fig4}
\end{figure}

 In particular, if $K$ is a torus knot, $K(1/q)$ is a
 Seifert fibered space over $S^2$ with three exceptional fibers.
 The splitting induced by an unknotting tunnel is the
 ``vertical" splitting and
 the strongly irreducible non-minimal genus splitting is the
 ``horizontal" splitting \cite{Moriah-Sedgwick}, \cite{Sedgwick}.
 Hence Theorem 1.2 gives some insight to the relation of a vertical
 splitting and a horizontal splitting of such manifolds (Fig. 4)
 (Fig. 5).

\begin{figure}[h]
    \centerline{\includegraphics[width=5cm]{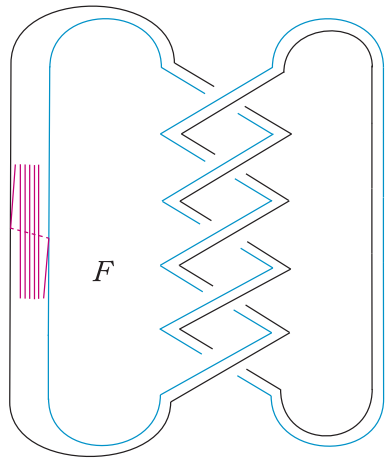}}
    \caption{Strongly irreducible non-minimal genus
    Heegaard splitting of genus $2n$}\label{fig5}
\end{figure}


\begin{thebibliography}{99}


    \bibitem{Casson-Gordon}
    A. Casson and C. Gordon,
    {\it Manifolds with irreducible Heegaard splittings of
    arbitrary large genus},
    Unpublished.

    \bibitem{Hempel}
    J. Hempel,
    {\it $3$-manifolds as viewed from the curve complex},
    Topology {\bf 40} (2001), no. 3, 631--657.

    \bibitem{Kobayashi}
    T. Kobayashi,
    {\it A construction of $3$-manifolds whose homeomorphism
    classes of Heegaard splittings have polynomial growth},
    Osaka J. Math. {\bf 29} (1992), no. 4, 653--674.

    \bibitem{Lee}
    J. H. Lee,
    {\it On (Disk,Annulus) pairs of Heegaard splittings that
    intersect in one point},
    to appear in Bull. Korean Math. Soc.

    \bibitem{Lustig-Moriah}
    M. Lustig and Y. Moriah,
    {\it $3$-manifolds with irreducible Heegaard splittings of high
    genus},
    Topology {\bf 39} (2000), no. 3, 589--618.

    \bibitem{Moriah-Schleimer-Sedgwick}
    Y. Moriah, S. Schleimer, and E. Sedgwick,
    {\it Heegaard splittings of the form $H+nK$},
    Communications in Analysis and Geometry, {\bf 14} (2006), no.
    2, 215--247.

    \bibitem{Moriah-Schultens}
    Y. Moriah and J. Schultens,
    {\it Irreducible Heegaard splittings of Seifert fibered spaces
    are either vertical or horizontal},
    Topology {\bf 37} (1998), no. 5, 1089--1112.

    \bibitem{Moriah-Sedgwick}
    Y. Moriah and E. Sedgwick,
    {\it The Heegaard structure of Dehn-filled manifolds},
    arXiv:0706.1927,

    \bibitem{Morimoto}
    K. Morimoto,
    {\it Planar surfaces in a handlebody and a theorem of
    Gordon-Reid},
    Proceedings of Knots 96 edited by Shin'ichi Suzuki, pp. 123--146.

    \bibitem{Saito}
    T. Saito,
    {\it Disjoint pairs of annuli and disks for Heegaard
    splittings},
    J. Korean Math. Soc. {\bf 42} (2005), no. 4, 773--793.

    \bibitem{Schleimer}
    S. Schleimer,
    {\it The disjoint curve property},
    Geom. Topol. {\bf 8} (2004), 77--113.

    \bibitem{Schultens}
    J. Schultens,
    {\it Additivity of tunnel number for small knots},
    Comment. Math. Helv. {\bf 75} (2000), no. 3, 353--367.

    \bibitem{Sedgwick}
    E. Sedgwick,
    {\it The irreducibility of Heegaard splittings of Seifert
    fibered spaces},
    Pacific J. Math. {\bf 190} (1999) no. 1, 173--199.

    \bibitem{Thompson}
    A. Thompson,
    {\it The disjoint curve property and genus $2$ manifolds},
    Topology Appl. {\bf 97} (1999), no. 3, 273--279.

\end{thebibliography}
\end{document}